%% 
%% Copyright 2007, 2008, 2009 Elsevier Ltd
%% 
%% This file is part of the 'Elsarticle Bundle'.
%% ---------------------------------------------
%% 
%% It may be distributed under the conditions of the LaTeX Project Public
%% License, either version 1.2 of this license or (at your option) any
%% later version.  The latest version of this license is in
%%    http://www.latex-project.org/lppl.txt
%% and version 1.2 or later is part of all distributions of LaTeX
%% version 1999/12/01 or later.
%% 
%% The list of all files belonging to the 'Elsarticle Bundle' is
%% given in the file `manifest.txt'.
%% 

%% T^mplate article for Elsevier's document class `elsarticle'
%% with numbered style bibliographic references
%% SP 2008/03/01

\documentclass[preprint,11pt,xcolor=dvipsnames, a4wide]{amsart}%{elsarticle}
%\documentclass[preprint,12pt,xcolor=dvipsnames,a4wide]{amsart}

%% Use the option review to obtain double line spacing
%% \documentclass[authoryear,preprint,review,12pt]{elsarticle}

%% Use the options 1p,twocolumn; 3p; 3p,twocolumn; 5p; or 5p,twocolumn
%% for a journal layout:
 %\documentclass[final,1p,times]{elsarticle}
%% \documentclass[final,1p,times,twocolumn]{elsarticle}
%% \documentclass[final,3p,times]{elsarticle}
%% \documentclass[final,3p,times,twocolumn]{elsarticle}
%% \documentclass[final,5p,times]{elsarticle}
%% \documentclass[final,5p,times,twocolumn]{elsarticle}￼   
%% For including figures, graphicx.sty has been loaded in
%% elsarticle.cls. If you prefer to use the old commands
%% please give \usepackage{epsfig}

%% The amssymb package provides various useful mathematical symbols

\usepackage{amsthm,amsmath,natbib, mathtools}
\usepackage{ifthen}
\usepackage{enumitem}
\usepackage{amssymb,dsfont,url,color,booktabs}
\usepackage[x11names]{xcolor}
 \usepackage{tikz}
 
 \usepackage{enumitem}
 \usepackage{geometry}
\usepackage{rotating}
 \geometry{a4paper,left=40mm,right=30mm, top=3cm, bottom=4cm} 

%%%%%%%%%%%%%%%%%%%%%%%%%%%%%%%%%
%%% Max Gass packages and operators
%%%%%%%%%%%%%%%%%%%%%%%%%%%%%%%%%
\usepackage{epstopdf}
\usepackage{chngcntr}
\usepackage{bbm}
\usepackage{array}
\usepackage[export]{adjustbox}[2011/08/13] % for centering figures
\usepackage{caption}
\usepackage{multirow}
\usepackage{todonotes}

\usepackage[hang]{subfigure} %for subfigures
\usepackage{wrapfig} %for wrapping text around figures/tables
\usepackage{tikz}
\usetikzlibrary{arrows, patterns}
\tikzset{
  schraffiert/.style={pattern=horizontal lines,pattern color=#1},
  schraffiert/.default=black
}
\usepackage{todonotes}

\newcommand*{\IR}{\mathbb{R}}

\newcommand*{\IC}{\mathbb{C}}

%%%%%%%%%%%%%%%%%%%%%%%%%%%%%%%%%

%\usepackage[notref]{showkeys}
%\usepackage{cleveref}
%\renewcommand*\showkeyslabelformat[1]{\normalfont\tiny\ttfamily(#1)}
% 
 
\theoremstyle{plain}
\newtheorem{theorem}{Theorem}[section]

\newtheorem{corollary}[theorem]{Corollary}
\newtheorem{defin}[theorem]{Definition}
\newtheorem{remark}[theorem]{Remark}

\newtheorem{assumptions}[theorem]{Assumptions}

\setlength{\arraycolsep}{2pt}  %damit der die L�cken um "=" bei eqnarray nicht zu weit sind

\newcommand{\dd}[1]{\operatorname{d}\!#1}
\newcommand{\ccd}{\mathds{C}^d}
\newcommand{\ee}[1]{\operatorname{e}^{#1}}

\newcommand{\OB}{\mathcal{B}}

\newcommand{\OI}{\mathcal{I}}
\newcommand{\II}{\mathcal{I}}

\newcommand{\OP}{\mathcal{P}}
\newcommand{\OQ}{\mathcal{Q}}

\newcommand{\OU}{\mathcal{U}}

\newcommand{\OX}{\mathcal{X}}

\newcommand{\notiz}[1]{\relax}

\newcommand{\zitep}[1]{\relax}
\newcommand{\skr}{\rangle}
\newcommand{\1}{\mathds 1}            % Indikatoreins

\newcommand{\nn}{\mathds N}

\newcommand{\rr}{\mathds R}

\newcommand{\rrd}{\mathds{R}^d}

\newcommand{\cc}{\mathds C}
\newcommand{\skl}{\langle}

\newcommand{\argmax}{\operatorname{arg\,max}}

\newcommand{\Price}[1][]{
		\ifthenelse{\equal{#1}{}}{\mathit{Price}}{\Price{}^{#1}}
	} 
	%$\Price[k,p],\Price,\Price^{k,p}$#

%\newcommand{\Price}{\mathit{Price}}
%{\Price(k,p)}
\usepackage{mathrsfs}

\newcommand{\ulb}{\underline{b}}	
\newcommand{\olb}{\overline{b}}

\newcommand{\tild}{~}

\newlength{\wordlength}

%\renewcommand{\cite}[2][]{\citet[#1]{#2}}

%% The amsthm package provides extended theorem environments
%% \usepackage{amsthm}

%% The lineno packages adds line numbers. Start line numbering with
%% \begin{linenumbers}, end it with \end{linenumbers}. Or switch it on
%% for the whole article with \linenumbers.
%% \usepackage{lineno}

%\journal{Journal of Functional Analysis}

\begin{document}

%\begin{frontmatter}

%% Title, authors and addresses

%% use the tnoteref command within \title for footnotes;
%% use the tnotetext command for theassociated footnote;
%% use the fnref command within \author or \address for footnotes;
%% use the fntext command for theassociated footnote;
%% use the corref command within \author for corresponding author footnotes;
%% use the cortext command for theassociated footnote;
%% use the ead command for the email address,
%% and the form \ead[url] for the home page:
%% \title{Title\tnoteref{label1}}
%% \tnotetext[label1]{}
%% \author{Name\corref{cor1}\fnref{label2}}
%% \ead{email address}
%% \ead[url]{home page}
%% \fntext[label2]{}
%% \cortext[cor1]{}
%% \address{Address\fnref{label3}}
%% \fntext[label3]{}

\title[Parametric Integration with Magic Points]{ Parametric Integration by Magic Point Empirical Interpolation}
%Integration with Magic Point Interpolation: Exponential convergence for analytic functions and application to mathematical finance}%{Magic Points: An Interpolation for Parametric Option Pricing with Fourier Methods}
%Integrals \\and Application to Finance
%\\Magic Point Interpolation Fourier Transform for Parametric Option Pricing}

%% use optional labels to link authors explicitly to addresses:
\author{Maximilian Ga{\ss}}
\author{Kathrin Glau}
%\fntext{The roots of the present paper go back to the author's dissertation \cite{PhdGlau} and the author thanks  Ernst Eberlein for his valuable support and the DFG for financial support through project \mbox{EB66/11-1}. For fruitful discussions on further developments the author  expresses her gratitude to Carsten Eilks and to Claudia Kl{\"u}ppelberg for rich comments on a previous version of the manuscript.} }

\date{\today\\\indent Technische Universit{\"a}t M{\"u}nchen, Center for Mathematics\\ \indent kathrin.glau@tum.de, maximilian.gass@mytum.de}
\maketitle

%\address{Technische Universit\"at M\"unchen Center for Mathematics\\kathrin.glau@tum.de}
%%%\author[label1,label2]{}
%% \address[label1]{}
%% \address[label2]{}

%\author{}

%\address{}
\begin{abstract}
%We examine an interpolation method tailored to approximate integrals of parametric functions. The method is based on the empirical interpolation of the integrands with magic points introduced in \cite{BarraultNguyenMadayPatera2004}.
We derive analyticity criteria for explicit error bounds and an exponential rate of convergence of the magic point empirical interpolation method introduced by \cite{BarraultNguyenMadayPatera2004}. Furthermore, we investigate its application to parametric integration.
 We find that the method is well-suited to Fourier transforms and has a wide range of applications in such diverse fields as probability and statistics, signal and image processing, physics, chemistry and mathematical finance. %As an illustrative example, we apply the method to 
 To  illustrate the method, we apply it to the evaluation of probability densities by parametric Fourier inversion. Our numerical experiments display convergence of exponential order, even in cases where the theoretical results do not apply.

\end{abstract}

%\begin{keyword}

\keywords{\footnotesize{
$\,$\\
%Chebyshev polynomial interpolation
Parametric Integration, 
%\sep
Fourier Transform,  
Parametric Fourier Inversion,
%Uncertainty Quantification,
%\sep
%Sparse Integration, 
%\sep
Magic Point Interpolation,  
%\sep
Empirical Interpolation, 
Online-Offline Decomposition\\
%Multivariate Integration
%% PACS codes here, in the form: \PACS code \sep code
%% MSC codes here, in the form: \MSC code \sep code
%% or \MSC[2008] code \sep code (2000 is the default)
\footnotesize{Mathematics subject classification \subjclass[2010]{
Primary: 65D05, % Interpolation
65D30, % 65-XX Numerical Analysis, 65D30 Numerical Integration
%65D32, %65D32 Quadrature and cubature
Secondary: 65T40 %Trigonometric approximation and interpolation
%\MSC 
%35S10 ??? %\sep 60G51 %60Gxx: Stochastic processes: %60G51 Processes with independent increments
%\sep
%, 60-08 %60-08 Computational methods (not classified at a more specific level) [See also 65C50] ---
%--65Cxx: Probabilistic methods, simulation and stochastic differential equations  
% Work out further!
}
}
}
}

%\end{keyword}

\section{Introduction}

%Computational method for the approximation of parametric integrals....\todo{What exactly is to say in the first paragraph? What are the references for paramteric integration routines? Is there something? }

At the basis of a large variety of mathematical applications lies the computation of parametric integrals of the form
%For a large variety of mathematical applications, parametric integrals of the form 
\begin{equation}\label{parametric-integral}
\OI(h_p):=\int_{\Omega} h_p(z)\mu(\dd z)\qquad\text{for all }p\in\OP,
\end{equation}
where $\Omega\subset\ccd$ is a compact integration domain,\tild$\OP$ a compact parameter set, $(\Omega,\mathfrak{A}, \mu)$ a measure space with $\mu(\Omega)<\infty$ and $h:\OP\times\Omega\to\cc$ a bounded function.%, have to be computed. 

A prominent class of such integrals are parametric \emph{Fourier transforms} %of the form 
\begin{equation}\label{para-Fourier-integral}
\widehat{f_q}(z) = \int_{\Omega} \ee{i\skl z,x \skr} f_q(x) \dd x\qquad\text{for all }p=(q,z)\in\OP
\end{equation}
with a parametric family of complex functions $f_q$ with compact support $\Omega$.
%An integral of form \eqref{parametric-integral} typically appears after a first approximation step %the truncation of the domain to a compact set,
% in order to compute integrals of the form
%\begin{equation}\label{para-Fourier-integral-over-rrd}
%\int_{\rrd} \ee{i\skl z,x \skr} f_q(x) \dd x,
%\end{equation}
%which defines a bijection of $L^2(\rrd)$ and thus is suitable to characterize a large set of functions.
Today, Fourier transforms lie at the heart of applications in %diverse disciplines such as
optics, electric engineering, chemistry, probability, partial differential equations, statistics and finance. 
%Many examples underline that 
The application of Fourier transforms even sparked many advancements in those disciplines, underlining its impressive power.
%Many examples underline that the occurrence of Fourier transforms in mathematical theory even sparked the advancement of 
%science in those other disciplines.
 The introduction of Fourier analysis to image formation theory in \cite{Duffieux1946} marked a turning point in optical image processing, as outlined for instance in \cite{Stark1982}.
The application of Fourier transform to nuclear magnetic resonance in \cite{ernst1966} was a major breakthrough in increasing the sensitivity of NMR, for which Richard R.~Ernst was awarded the Nobel prize in Chemistry in 1991. Fourier analysis also plays a fundamental role in statistics, in particular in the spectral representation of time series of stochastic data, see \cite{BrockwellDavis2002}. For an elucidation of other applications impacted by Fourier theory we recommend appendix~1 of \mbox{\cite{Kammler2007}}. 
%We recommend appendix~1 of \cite{Kammler2007} 
 
For parametric Fourier integrals of form\tild\eqref{para-Fourier-integral} with fixed value of $q$ and $z$ being the only varying parameter, efficient numerical methods have been developed based on discrete Fourier transform (DFT) and fast Fourier transform (FFT).
The latter was developed by \cite{CooleyTukey1965} to obtain the Fourier transform $\widehat{f}(z)$ for a large set of values $z$ simultaneously. In regard to \eqref{para-Fourier-integral}, this is the special case of a parametric Fourier transform where $q$ is fixed and $z$ varies over a specific set. %The FFT algorithm has been developed by \cite{CooleyTukey1965} and its 
The immense impact of FFT highlights the exceptional usefulness of efficient methods for parametric Fourier integration. %and relies on the divide and conquer principle that has been found by a student of Andrei Kolmogorov in 19?? and has experienced a tremendous impact on engineering, science, and mathematics, see ???\todo{References for DFT and FFT and clarify the year}. 
%One reason for the overwhelming benefit from Fourier representations is that a fast decay of $|f(x)|$ for $|x|\to \infty$ translates to high regularity of the Fourier transform $\widehat{f}$. High regularity properties of functions in turn entail enjoyable approximation properties. %, which is a fact that we will also exploit A fact that we will expolit
% Another key property of Fourier transforms is the algebraic fact that the rather complex operation of differentiating the function $f$ translates to a multiplication of its Fourier transform, $\widehat{f'}(z) = - iz \widehat{f}(z)$, under a mild integrability condition. One striking advantage of this identity is that it renders linear differential equations to algebraic equations and linear partial integral equations to ordinary differential equations. 

%More generally, an interpolation method for the integration of parametric integrals \eqref{parametric-integral} allows us to efficiently compute Fourier transforms of a parametric function $f_p$ with compact support $\Omega$,
%\begin{equation}
%\widehat{f_p}(z) = \int_{\Omega} \ee{i\skl z,x \skr} f_p(x) \dd x\qquad\text{for all }(p,z)\in\OP.
%\end{equation}
%

Shifting the focus to %our example of choice, we consider density functions of parametric classes of probability measures that conveniently are expressed via their Fourier transforms.
%from its application to Fourier theory as such, the Fourier transform of a single function $f$ for varying values of $x$ is an example of a parametric integral.
%Then, taking $q$ to be a fixed single value, expression~\eqref{para-Fourier-integral-over-rrd} is the Fourier transform of a single function for varying values of $x$.
other examples, parametric integrals arise as generalized moments of parametric distributions in probability, statistics, engineering and computational finance. These expressions often appear in optimization routines, where they have to be computed for a large set of different parameter constellations. Prominent applications are model calibration and statistical learning algorithms based on moment estimation, regression and Expectation-Maximization (EM), see for instance \cite{HastieTibshiraniFriedman2009}.% with% a least-square algorithm, ...?\todo{fill in the most important examples}

The efficient computation of parametric integrals is also a cornerstone of the quantification of parameter uncertainty in generalized moments of form\tild \eqref{parametric-integral}, which leads to an integral of the form 
\begin{equation}\label{uncertain-parametric-integral}
\int_{\OP}  \int_{\Omega} h_p(z)\mu(\dd z) P(\dd p),
\end{equation}
where $P$ is a probability distribution on the parameter space $\OP$. % \todo{References for uncertainty quantification}

In all of these cases, parametric integrals of the form \eqref{parametric-integral} have to be determined for a large set of parameter values. Therefore, efficient algorithms for parametric integration have a wide range of applicability. % in applied mathematics. 
%A magic point empirical interpolation method has been developed by \cite{BarraultNguyenMadayPatera2004} in the context of reduced basis methods for parametric partial differential equations.
In the context of reduced basis methods for parametric partial differential equations, a magic point empirical interpolation method has been developed by \cite{BarraultNguyenMadayPatera2004} to treat nonlinearities that are expressed by parametric integrals.
The applicability of this method to the interpolation of parametric functions has been demonstrated by \cite{MadayNguyenPateraPau2009}. 

In this article, we focus on the approximation of parametric integrals by magic point empirical interpolation in general, a method that we call \emph{magic point integration} and
% and emphasize its broad scope. We call the resulting approximation method \emph{magic point integration}. A special application to finance is studied in \cite{GassGlauMair2015}. 
%. Besides, it seems not to have been exploited further in the literature.\footnote{We mention that the Empirical Interpolation for integration has been examined in the unpublished Bachelor thesis \cite{Wittwar2014} under supervision of Bernhard Haasdonk. Thi study has been conductedTheorem 2.4 in \cite{MadayNguyenPateraPau2009} independently from the present article.}.
% \todo{ What are the references for parametric integration routines? Is there something else? --> Nothing found}
%The main contribution of the present article is to
\begin{enumerate}
\item[---]
provide sufficient conditions for an \emph{exponential rate of convergence} of magic point interpolation and integration in its degrees of freedom, accompanied by \emph{explicit error bounds},
\item[---]
translate these conditions to the special case of parametric Fourier transforms, which shows the broad scope of the results, 
\item[---]
empirically prove efficiency of the method in a numerical case study.\footnote{An in-depth case study related to finance is presented in \cite{GassGlauMair2015} and supports our empirical findings.}%, even for examples beyond the scope of the theoretical results. 
\end{enumerate}

%An application to finance is studied in \cite{GassGlauMair2015}.  

 The remainder of the article is organized as follows. In section \ref{sec-magic} we revisit the magic point empirical interpolation method from \cite{BarraultNguyenMadayPatera2004} and present the related integral approximation, which can be perceived from two different perspectives. On the one hand it delivers a \emph{quadrature rule for parametric integrals} and, on the other, an \emph{interpolation method for parametric integrals} in the parameter space. In section \ref{sec-convergence} we provide analyticity conditions on the parametric integrals that imply exponential order of convergence of the method. We focus on the case of parametric Fourier transforms in the subsequent section\tild\ref{sec-magic-FT}. In a case study we discuss the numerical implementation and its results in section \ref{sec-case-study}. Here,
 we use the magic point integration method to evaluate densities of a parametric class of distributions that are defined through their Fourier transforms.

\section{Magic Point Empirical Interpolation for Integration}\label{sec-magic}
%\paragraph{Offline Phase}
%\subsection{The Iterative Procedure}\label{sec-procedure}
We now introduce the \emph{magic point empirical interpolation method for parametric integration} to approximate parametric integrals of the form \eqref{parametric-integral}.
%\begin{equation}\label{parametric Integration}
%\OI(h_p):=\int_{\Omega} h_p(z)\mu(\dd z)\quad \text{for }p\in \OP.
%\end{equation}
%Let us point out, that more generally, integrals with respect to a fixed measure that is finite on $\Omega$ can be considered as well. We present the error analysis in section \ref{sec-convergence} for this more general situation. 
%The Magic Point Empirical Interpolation algorithm is designed to approximate parametric functions in a set %parametric family of %$\OU$ of the continuous functions $h:\Omega\to \rr$,  In abstract terms, let 
%\begin{align}
%\OU:=\big\{h_p:\Omega\to \rr\,|\,p\in \OP  \big\} 
%\end{align}
%$\Omega \subset \rrd$ (or  $\Omega \subset \ccd$) open and bounded and $\OP$ a parameter set. 
Before we closely follow \citet{MadayNguyenPateraPau2009} to describe the interpolation method, let us state our basic assumptions that ensure the well-definedness of the iterative procedure. 

\begin{assumptions} \label{assumption:magic}
Let $\left(\Omega,\|.\|_{\infty}\right)$ and $\left(\OP,\|.\|_{\infty}\right)$ be compact, $\OP\times \Omega\ni (p,z)\mapsto h_p(z)$bounded and $p\mapsto h_p$ be sequentially continuous, i.e. for every sequence $p_i\to p$ we have  $\|h_{p_i}-h_{p}\|_\infty\to0$. Moreover, there exists $p\in\OP$ such that the function $h_p$ is not constantly zero. %the set is nonempty and does not consist of the function that constantly is zero.%This assumption is enough to guarantee that $u_m=\argmax_{u\in\mathcal{U}}\|u-I_{m-1}(u)\|_{\infty}$ is well-defined.
%\begin{enumerate}
%\item[(A1)]
%\item[(A1)]
%\end{enumerate}
\end{assumptions}
%We follow the iterative Empirical Interpolation  procedure from \cite{BarraultNguyenMadayPatera2004} that we describe in detail in section \ref{sec-magic} below.
For $M\in \nn$, the method delivers iteratively the \emph{magic points} 
$z^\ast_1,\ldots,z^\ast_M$, functions $\theta^M_1,\ldots,\theta^M_M$ and constructs the \emph{magic point interpolation} operator%$I_M$ %mapping from $h$ to a tensor specified by
\begin{align}
I_M(h)(p,z):= \sum_{m=1}^M h_{p}(z^\ast_m) \theta_m^M(z),
\end{align}
which allows us to define the \textbf{magic point integration} operator %with $M$ points by
\begin{align}\label{Int_M}
\II_M(h)(p):= \sum_{m=1}^M h_{p}(z^\ast_m) \int_{\Omega} \theta_m^M(z) \dd z.
\end{align}
%We now closely follow \citet{MadayNguyenPateraPau2009} to present the the iterative definition of the magic points $z^\ast_1,\ldots,z^\ast_M$ and functions $\theta_1,\ldots,\theta_m$.

In the first step, $M=1$, we define the first \emph{magic parameter} $p^\ast_1$, the first \emph{magic point} $z^\ast_1$ and the first \emph{basis function} $q_1$ by
\begin{align}\label{def_p1}
p_1^\ast &:= \underset{p\in \OP}{\argmax}\|h_p\|_{L^\infty},
\\
 z^\ast_1 &:=\underset{z\in \Omega}{\argmax}|h_{p^\ast_1}(z)|
\label{def_z1},\\
 q_1(\cdot) &:=\frac{h_{p_1^\ast}(\cdot)}{h_{p_1^\ast}(z^\ast_1)}.\label{def_q1}
\end{align}
Notice that thanks to Assumptions \ref{assumption:magic}, these operations are well-defined. 

For $M\geq 1$ and $1\leq m\leq M$ we define
\begin{align}\label{def-theta}
\theta_m^M(z) := \sum_{j=1}^M(B^M)^{-1}_{jm} q_j(z), \qquad %\, \text{resp. } \vec{\theta}^M(z) = ((B^M)^{-1})^\top \vec{q}^M(z),\\%\quad\text{and }
B^M_{jm}:=q_m(z^\ast_j),
\end{align}
where we denote by $(B^M)^{-1}_{jm}$ the entry in the $j$th line and $m$th column of the inverse of matrix $B^M$. By construction, $B^M$ is a lower triangular matrix with unity diagonal and is thus invertible. 
%The \textit{magic points} $\{z^\ast_1,\ldots,z^\ast_M \}\subset \Omega$ and the vector of \textit{basis functions} $\vec{q}^M=(q_1,\ldots,q_M)$ are recursively defined in the following way:

Then, recursively, as long as there are at least $M$ linearly independent functions in $\{h_p\,|\, p\in\OP\}$, the algorithm chooses the next magic parameter\tild $p_M^\ast$ according to a greedy procedure. It selects the parameter so that $h_{p_M^\ast}$ is the function in the set $\{h_p\,|\, p\in\OP\}$ which is worst represented by its approximation with the previously identified $M-1$ magic points and basis functions. So the $M$th magic parameter is %that is the error $\epsilon_{M-1}$ is maximal,
\begin{align}\label{defu_M}
p^\ast_M := \underset{p\in \OP}{\argmax}\|h_p - I_{M-1}(h)(p,\cdot)\|_{L^\infty}.%\,\text{ the $M$th \emph{magic parameter}.}
\end{align}
%Since every $u\in\mathcal{U}$ is a parametric function, $u=u(p)$ for some $p\in\mathcal{P}$, it can be identified by the associated parameter $p$. We call $p_M\in\mathcal{P}$ identifying $u_M$ in \eqref{defu_M} the $M$th \emph{magic parameter}. 
In the same spirit, %the point for which the approximation with $M-1$ magic points and basis functions of the thus chosen function is worst is the next magic point,
select the $M$th magic point as %worst approximated 
\begin{align}\label{defxi_M}
z^\ast_M:=\underset{z\in \Omega}{\argmax}\big|h_{p^\ast_M}(z) - I_{M-1}(h)(p^\ast_M,z)\big|.
\end{align}
%and we call $z^\ast_M$ the $M$th magic point.
The $M$th basis function $q_M$ is the residual $r_M$, normed to $1$ when evaluated at the new magic point, i.e.\
\begin{align}
r_M(z)&:= h_{p^\ast_M}(z) - I_{M-1}(h)(p^\ast_M,z),\\
 q_M(\cdot)&:=\frac{r_M(\cdot) }{r_M(z^\ast_M)}.\label{def-qM}%\,\text{is the $M$th \emph{basis function}.}
\end{align}
%In order to approximate the option price, it now suffices to integrate the basis functions. The approximation is now given via
%\begin{align}\label{aproxprice}
%\II_M\big( h_{p}\big) := \sum_{m=1}^M h_{p}(z^\ast_m) \int_{\Omega} \theta_m^M(z)\dd z.
%\end{align}
Notice the well-defined of the operations in the iterative step thanks to Assumptions\tild\ref{assumption:magic} and the fact that the denominator in \eqref{def-qM} is zero only if all functions in $\{h_p\,|\, p\in\OP\}$ are perfectly represented by the interpolation $I_{M-1}$, in which case they span a linear space of dimension $M-1$ or less.

\begin{remark}[Empirical quadrature rule for integrating parametric functions]\label{rem-snapshotprices}
\emph{
Magic point integration is an interpolation method for integrating a parametric family of integrands over a compact domain. From this point of view, the magic point empirical interpolation of \cite{BarraultNguyenMadayPatera2004} provides a \emph{quadrature rule for integrating parametric functions}. The weights are $\int_{\Omega} \theta_m^M(z) \dd z$ and the nodes are the magic points $z^\ast_m$ for $m=1,\ldots,M$. As in adaptive quadrature rules, these quadrature nodes are not fixed in advance. Whereas adaptive quadrature rules iteratively determine the next quadrature node in view of integrating a single function, the \emph{empirical integration method} \eqref{Int_M} tailors itself to the integration of a given \emph{parametrized family of integrands}.
}
\end{remark}

\begin{remark}[Interpolation of parametric integrals in the parameters]\label{rem-snapshotprices}
\emph{
For each $m=1,\ldots,M$ the function $\theta_m^M$ is a linear combination of snapshots $h_{p^\ast_j}$ for $j=1,\ldots,M$ with coefficients $\beta^m_j$, which may be iteratively computed. We thus may rewrite formula\tild \eqref{Int_M}\tild as% instead of formula \eqref{aproxprice}, %directly  approximate the option price by a linear combination of snapshot prices,
\begin{align}\label{magic-interpolation-integral}
\OI_M(h)(p) = \sum_{m=1}^M h_{p}(z^\ast_m) \sum_{j=1}^M \beta^m_j\int_\Omega h_{p_j^\ast}(z)\dd z.
\end{align}
Thus, for an arbitrary parameter $p\in\OP$, the integral $\int_\Omega h_{p}(z)\dd z$ is approximated by a linear combination of integrals $\int_\Omega h_{p_j^\ast}(z)\dd z$ for the magic parameters $p^\ast_j$. In other words, $\OI_M(h)$ is an interpolation of the function $p\mapsto \int_\Omega h_{p}(z)\dd z$. Here, the interpolation nodes are the magic parameters $p^\ast_j$ and the coefficients are given by $\sum_{m=1}^M h_{p}(z^\ast_m)\beta_j^m$. In contrast to classic interpolation methods, the "magic nodes" are tailored to the parametric set of integrands. As a striking advantage of this approach, the interpolation nodes are chosen independently of the dimensionality of the parameter space.% since the parameter space is implemented as a sample space.
}
\end{remark}
%{\color{red}
%...From perspective (i), magic point integration for parametric option prices is an alternative to standard quadrature rules. Taking the point of view of (ii), it can be compared with a benchmark method for parametric option pricing by interpolation. While standard integration routines suffer from the curse of dimensionality of the integration domain and standard interpolation methods applied to the prices in the parameter space suffer from the curse of dimensionality of the integration domain, under analyticity conditions on the integrands, 
%
%the magic point integration method provides an exponential convergence if the best $n$-term approximation decays fast enough.....
%} 
%\emph{
% Comparing \eqref{magic-interpolation-integral} to polynomial interpolation in $p$, it becomes evident that magic point integration is an appropriate tool to beat the curse of dimensionality in the parameter space. We discuss this in more detail after presenting the convergence results. 
%an integration routine that dos not require a restriction 
%}
%with the magic parameters $p^\ast_m$ for $m=1,\ldots,M$.
%price $\II_M\big(\Pi^{k,p}\big)$ is a linear combination of the respective snapshot prices.
%In formula \eqref{aproxprice}, the restriction to a bounded integration domain is not crucial and can be omitted. Then the approximate price $\II_M\big(\Pi^{k,p}\big)$ is a linear combination of the respective snapshot prices. %, thus the price can be approximated by a linear combination of snapshot prices by choosing 

A discrete approach to empirical interpolation has been introduced by \cite{ChaturantabutSorensen2010}. Whereas the empirical interpolation is designed for parametric functions, the input data for the Discrete Empirical Interpolation Method (DEIM) is discrete. A canonical way to use DEIM for integration is to first choose a fixed grid in $\Omega$ for a discrete integration of all parametric integrands. Then, DEIM can be performed on the integrands evaluated on this grid. 

In contrast, using magic point empirical interpolation for parametric integration separates the choice of the integration grid from the selection of nodes $p_m^\ast$. For each function, a different integration discretization may be used. Indeed, in our numerical study, we leave the discretization choices regarding integration to Matlab's \texttt{quadgk} routine.
%We will indeed use this freedom in the choice of discretizations in our Matlab implementation of the scheme to approximately compute the integrals $\int_\Omega h_{p^\ast_m}(z) \dd z$ with an adaptive quadrature rule provided by Matlab.
Fixing a grid for discrete integration beforehand might for example become advantageous when the domain of integration $\Omega$ is high-dimensional.% and the optimization routine for the determination of the magic points in the offline phase becomes too expensive.

\section{Convergence Analysis of magic point integration}\label{sec-convergence}

Theorem 2.4 in \cite{MadayNguyenPateraPau2009} relates the approximation error of magic point empirical interpolation to the best $n$-term approximation. We identify two generic cases for which this result implies exponential convergence of magic point interpolation and integration. In the first case, the set of parametric functions consists of univariate functions that are analytic on a specific Bernstein ellipse. In the second case, the functions may be multivariate and do not need to satisfy higher order regularity. The parameter set now is a compact subset of $\rr$ and the regularity of the parameter dependence allows an analytic extension to a specific Bernstein ellipse in the parameter space.

In order to formulate our analyticity assumptions, we define the %note for parameter $\varrho>1$ the  
\textit{Bernstein ellipse} $B([-1,1],\varrho)$ with parameter $\varrho>1$ as the open region in the complex plane bounded by the ellipse with foci $\pm 1$ and semiminor and semimajor axis lengths summing up to $\varrho$. Moreover, we define for $\ulb<\olb\in\rr$ the \textit{generalized Bernstein ellipse} by 
\begin{equation}
B([\ulb,\olb],\varrho):=\tau_{[\ulb,\olb]}\circ B([-1,1],\varrho),
\end{equation}
 where the transform $\tau_{[\ulb,\olb]}:\cc\to\cc$ is given by
	\begin{equation}
		\tau_{\underline{b},\overline{b}}(x) := \overline{b} + \frac{\underline{b}-\overline{b}}{2}(1-\Re(x))
 + i\,\frac{\overline{b}-\underline{b}}{2}\Im(x)\qquad \text{for all $x\in\IC$}.
	\end{equation}	 
% $\tau_{[\ulb,\olb]}\big(\Re(x)\big):=\olb + \frac{\ulb-\olb}{2}\big(1-\Re(x)\big)$ and $\tau_{[\ulb,\olb]}\big(\Im(x)\big):= \frac{\olb-\ulb}{2}\Im(x)$ for  every $x\in\cc$.
For $\Omega \subset\rr$ we set
	\begin{equation}
		B(\Omega,\varrho) := B([\inf \Omega,\sup \Omega],\varrho). 
	\end{equation}

 \begin{defin}
Let $f:X_1\times X_2 \to \cc$ with $X_m\subset \cc^{n_m}$ for $m=1,2$. 
We say $f$ has the \emph{analytic property with respect to $\OX_1\subset \cc$ in the first argument}, if $X_1\subset \OX_1$ and there exist functions $f_1:X_1\times X_2 \to \cc$ and $f_2:X_2 \to \cc$ such that
\[
f(x_1,x_2) = f_1(x_1,x_2) f_2(x_2)\quad\text{for all }(x_1,x_2)\in X_1\times X_2
\]
and $f_1$
has an extension
$f_1: \OX_1\times X_2\to \cc$ such that for all fixed $x_2\in X_2$ the mapping $x_1\mapsto f(x_1,x_2)$ is analytic in the inner of $\OX_1$ and
\begin{equation}
	C_1 := \sup_{(x_1,x_2)\in \OX_1\times X_2}|f_1(x_1,x_2)|\sup_{x_2\in X_2} |f_2(x_2)|<\infty.
\end{equation}
We define the analytic property of $f$ with respect to $\OX_2$ in the second argument analogously.
 \end{defin}
% \begin{defin}
%Let $f:K_1\times K_2 \to \cc$ with $K_m\subset \cc^{n_m}$ for $m=1,2$. 
%We say $f$ has the \emph{analytic property with respect to $\mathscr{K}_1\subset \cc$ in the first argument}, if $K_1\subset \mathscr{K}_1$ and $f$ has an extension
%$f: \mathscr{K}_1\times K_2\to \cc$ that is bounded, i.e.\ $\sup_{(x_1,x_2)\in \mathscr{K}_1\times K_2}|f(x_1,x_2)|<\infty$, and for all fixed $x_2\in K_2$ the mapping $x_1\mapsto f(x_1,x_2)$ is analytic in $\mathscr{K}_1$. We define the analytic property of $f$ with respect to $\mathscr{K}_2$ in the second argument analogously.
 %\end{defin}
% Without additional effort, the error bounds can be derived for approximating parametric integrals of a more general type. Therefore, let a measure space $(\Omega, \mathfrak{A}, \mu)$ with $\mu(\Omega)<\infty$ and 
We denote
 \begin{equation}
 \OI(h_p,\mu):= \int_{\Omega} h_p(z) \mu(\dd z)
 \end{equation}
 %For $\OU= \big\{ h_{p}:=h(p,\cdot)\,|\, p \in \OP \}$ from equation \eqref{Def-Uallg} assume $\OU \subset  L^1(\mu)$  and
 and for all $p\in\OP$ and $M\in\nn$,
\begin{equation}
 \II_M(h,\mu)(p):= \int_{\Omega} I_M(h)(p,z) \mu(\dd z),
 \end{equation}
 where $I_M$ is the magic point interpolation operator given by the iterative procedure from section\tild \ref{sec-magic}. Hence,\ %The interesting feature of this interpolation for numerical applications is the following identity,
 \begin{equation}
 \II_M(h,\mu)(p) = \sum_{m=1}^M h_p(z^\ast_m) \int_{\Omega} \theta^M_m(z) \mu(\dd z), %= \sum_{m=1}^M h_p(z^\ast_m) J(\theta^M_m,\mu),
 \end{equation}
 where $z^*_m$ are the magic points and $\theta^M_m$ are given by \eqref{def-theta} for every $m=1,\ldots,M$. 
 %The integrals $J(\theta^M_m,\mu)=\int_{\Omega} \theta^M_m(z) \mu(\dd z)$ can be precomputed in the offline phase, while the functions $p\mapsto h_p(z^\ast_m)$ are typically explicitly available.
 
 \begin{theorem}\label{theo-conv-magic-integration}
Let $\Omega\subset\cc^d$, $\OP$ and
%be compact, and let
$h:\OP\times\Omega\to\cc$ be such that Assumptions\tild\ref{assumption:magic} hold 
%and a measure space $(\Omega, \mathfrak{A}, \mu)$ 
%be such that $\OU:=\{h_p=h(p,\cdot)|p\in\OP\}\subset L^1(\mu)$ 
and one of the following conditions is satisfied for some\tild $\varrho>4$,
\begin{enumerate}
\item[(a)]
$d=1$, i.e. $\Omega$ is a compact subset of $\rr$, and~$h$ has the analytic property with respect to $B(\Omega,\varrho)$ in the second argument,
\item[(b)]
 $\OP$ is a compact subset of $\rr$, and~$h$ has the analytic property with respect to $B(\OP,\varrho)$ in  the first argument.
\end{enumerate} 
%Then for both cases,
%\begin{enumerate}
%\item[(i)]
%$h(p,z):= f(p,z)$ for every $(p,z)\in\OP\times\Omega$ and  $\OU=\OF\subset L^1(\mu)$, or
%%Then there exists a constant $C>0$ such that for all $p\in\OP$ and $M\in\nn$,
%% \begin{align}
%%\big\|h_p-I_M(h_p)\big\|_{\infty} &\leq CM(\varrho-4)^{-M},\\
%%\big|J(h_p,\mu) - \II_M(h_p,\mu)\big|&\leq C\mu(\Omega)M(\varrho-4)^{-M}.
%%\end{align}  
%\item[(ii)] there $h(p,z):= f(p,z)g(z)$ for every $(p,z)\in\OP\times\Omega$ and $\OU:=\{h_p=h(p,\cdot)|p\in\OP\}\subset L^1(\mu)$.
%%Then there exists a constant $C>0$ such that for all $p\in\OP$ and $M\in\nn$,
%% \begin{align}
%%\big\|h_p-I_M(h_p)\big\|_{\infty} &\leq CM(\varrho-4)^{-M},\\
%%\big|J(h_p,\mu) - \II_M(h_p,\mu)\big|&\leq C\mu(\Omega)M(\varrho-4)^{-M}.
%%\end{align}  
%\end{enumerate}
Then there exists a constant $C>0$ such that for all $p\in\OP$ and $M\in\nn$,
 \begin{align}
\big\|h-I_M(h)\big\|_{\infty} &\leq CM(\varrho/4)^{-M},\label{expo-h}\\
\sup_{p\in\OP}\big|\OI(h_p,\mu) - \II_M(h,\mu)(p)\big|&\leq C\mu(\Omega)M(\varrho/4)^{-M}.\label{expo-Int}
\end{align} 
\end{theorem}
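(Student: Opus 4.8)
The plan is to reduce both inequalities to the interpolation-error estimate of \cite{MadayNguyenPateraPau2009} combined with the classical approximation theory of analytic functions on Bernstein ellipses. Write $\OH:=\{h_p : p\in\OP\}\subset C(\Omega)$. If $\operatorname{span}\OH$ has finite dimension $N$, then for $M>N$ the interpolation is already exact and \eqref{expo-h} holds trivially; for $M\le N$ Assumptions~\ref{assumption:magic} guarantee that the iteration of section~\ref{sec-magic} is well defined up to step $M$, and Theorem~2.4 of \cite{MadayNguyenPateraPau2009} bounds the magic point interpolation error at every $p$ by a quantity of the form $c_M\,d_M(\OH)$, where $d_M(\OH)$ is the best approximation of $\OH$ by $M$-dimensional subspaces of $C(\Omega)$ and the factor $c_M$ grows at most exponentially in $M$: it combines the Lebesgue constant of the magic points (bounded by $2^M-1$) with the suboptimality of the greedy snapshot space, so $c_M\le C\,4^M$ suffices for our purposes. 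It therefore suffices to exhibit, for every $M$, an $M$-dimensional space $V_M\subset C(\Omega)$ with $\sup_{p\in\OP}\inf_{v\in V_M}\|h_p-v\|_\infty\le C\varrho^{-M}$: multiplying this by $c_M$ produces \eqref{expo-h}, the polynomial prefactor $M$ absorbing the harmless constants ($\tfrac{\varrho}{\varrho-1}$, the shift between polynomial degree $M-1$ and $M$, and the constant in the greedy estimate), while the hypothesis $\varrho>4$ is exactly what makes the resulting base $\varrho/4$ exceed $1$.

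To build $V_M$ I use the analytic property. Under~(a), factor $h_p(z)=h_1(p,z)\,h_2(p)$ with $z\mapsto h_1(p,z)$ analytic on the generalized Bernstein ellipse $B(\Omega,\varrho)$ and with finite constant $C_1$ as in the definition of the analytic property. Transporting $\Omega$ to $[-1,1]$ by the affine map underlying $B(\Omega,\varrho)$ and expanding $h_1(p,\cdot)$ in Chebyshev polynomials, the classical Bernstein estimate furnishes, for each fixed $p$ and each $\varrho'<\varrho$, a polynomial $P_p$ of degree at most $M-1$ with $\|h_1(p,\cdot)-P_p\|_{L^\infty(\Omega)}\le\tfrac{2}{\varrho'-1}(\varrho')^{-(M-1)}\sup_{B(\Omega,\varrho)}|h_1(p,\cdot)|$ — a maximum-principle argument converting the finite supremum over the open ellipse into such a bound on a slightly smaller one. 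Since the coefficients of $P_p$ may depend on $p$, the function $h_p(z)=h_1(p,z)h_2(p)$ is approximated by $P_p(z)h_2(p)$, which for each $p$ lies in the fixed $M$-dimensional space $V_M:=\operatorname{span}\{1,z,\dots,z^{M-1}\}$, with error $|h_2(p)|\,\|h_1(p,\cdot)-P_p\|_\infty\le\tfrac{2C_1}{\varrho'-1}(\varrho')^{-(M-1)}$ uniformly in $p$; letting $\varrho'\uparrow\varrho$ finishes this case. Under~(b) the analyticity is in the parameter, so I exchange the roles: $h_p(z)=h_1(p,z)h_2(z)$ with $p\mapsto h_1(p,z)$ analytic on $B(\OP,\varrho)$; the Chebyshev expansion $h_1(p,z)=\sum_{k\ge0}c_k(z)\,T_k(\tau^{-1}(p))$, with $\tau$ the affine map underlying $B(\OP,\varrho)$, then has $\sup_{z}\,|h_2(z)c_k(z)|\le 2C_1\varrho^{-k}$, so truncating at $k=M-1$ places every $h_p$ within $\tfrac{2C_1}{1-1/\varrho}\varrho^{-M}$ of the $M$-dimensional space $V_M:=\operatorname{span}\{h_2c_0,\dots,h_2c_{M-1}\}\subset C(\Omega)$. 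This second argument uses no regularity of $\Omega$ and hence covers $\Omega\subset\cc^d$.

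For \eqref{expo-Int} I would simply note that for every $p$,
\[
\big|\OI(h_p,\mu)-\II_M(h,\mu)(p)\big|=\Big|\int_\Omega\big(h_p(z)-I_M(h)(p,z)\big)\,\mu(\d z)\Big|\le\mu(\Omega)\,\big\|h-I_M(h)\big\|_\infty,
\]
and bound the right-hand side via \eqref{expo-h}. The main obstacle is the construction of $V_M$: the factorization $h=h_1h_2$ must be arranged so that the non-analytic factor never obstructs the approximation — which is precisely why the estimate is routed through the best $M$-dimensional approximation, whose approximant is free to carry the $p$- (resp.\ $z$-) dependent factor — and then the Chebyshev bounds must be made uniform in the remaining variable over the generalized Bernstein ellipse, with honest bookkeeping of $\varrho'$, the interpolation/greedy factor $c_M$ and the polynomial degree in order to land on the clean rate $M(\varrho/4)^{-M}$.
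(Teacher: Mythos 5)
Your proposal is correct and follows essentially the same route as the paper: factor $h$ via the analytic property, use the Chebyshev/Bernstein-ellipse estimate to build an $M$-dimensional space approximating $\{h_p\}$ at rate $\varrho^{-M}$, feed this into Theorem~2.4 of \cite{MadayNguyenPateraPau2009} (whence the $4^M$ loss and the condition $\varrho>4$), and obtain \eqref{expo-Int} by integrating \eqref{expo-h} against $\mu$. The only difference is cosmetic: you make the provenance of the $4^M$ factor (Lebesgue constant times greedy suboptimality) and the $\varrho'\uparrow\varrho$ bookkeeping explicit, where the paper simply cites Theorem~8.2 of \cite{Trefethen2013} and ``consults the proof'' of the Maday et al.\ result.
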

\begin{proof}
Assume (a). Thanks to an affine transformation we may without loss of generality assume $\Omega=[-1,1]$. As assumed, there exist functions $f_1:\OP\times\Omega\to\cc$ and $f_2:\Omega\to\cc$ such that
\begin{equation}\label{eq-h-factors}
h_p(z) = f_1(p,z) f_2(z)\qquad \text{for all }p,z\in \OP\times\Omega
\end{equation}
and $f_1$ has an extension $f_1:\OP\times B(\Omega,\varrho)\to\cc$ that is bounded and for every fixed $p\in \OP$ is analytic in the Bernstein ellipse $B(\Omega,\varrho)$. We exploit the analyticity property to relate the approximation error to the best $n$-term approximation of the set $\OU:=\{h_p\,|\,p\in \OP\}$. This can conveniently be achieved by inserting an example of an interpolation method that is equipped with exact error bounds, and we choose Chebyshev projection %polynomial interpolation
for this task.
%For polynomials of degree $N$, the Chebyshev nodes are given by ${z}_k = \cos\big(\pi\frac{2k+1}{2N+2}\big)$ and the basis functions are 
%\begin{align}\label{eq-Tjp}
%  T_j(z) := \cos\big(j \arccos(z)\big)\qquad\text{for $z\in [-1,1]$ and $j\le N$.}
%   \end{align}
%For fixed $p\in \OP$, an interpolation of $f_1(p,\cdot)$ with Chebyshev polynomials of degree $N$ is of the form
% \begin{align}\label{eq-ChebInter1dim}
%I^\text{Cheby}_N(f_1(p,\cdot))(z)&:= \sum_{j=0}^N c_j T_j(z)
%%, \\
%%c_j&:=\frac{2^{\1_{j>0}}}{N+1}\sum_{k=0}^N f(p,z_k)\cos\Big(j\pi\frac{2k +1}{2N+2}\Big),\quad j\le N,
% \end{align}
% with coefficients 
% $c_j:=\frac{2^{\1_{j>0}}}{N+1}\sum_{k=0}^N f(p,z_k)\cos\big(j\pi\frac{2k +1}{2N+2}\big)$.
From Theorem\tild 8.2 in \cite{Trefethen2013} we obtain the explicit error bound, 
\begin{equation}\label{Cheby-error}
\sup_{p\in\OP}\big\| f_1(p,\cdot) - I^\text{Cheby}_N\big(f_1(p,\cdot)\big) \big\|_{\infty} \le c(f_1)M\varrho^{-N} ,%\quad\text{for every $p\in\OP$,}
\end{equation} 
with constant $c(f_1):=\frac{2}{\varrho-1}\sup_{(p,z)\in \OP\times B(\Omega,\varrho)} \big|f_1(p,z)\big|$.

The Chebyshev projection of the family of functions $f_1(p,\cdot)$ induces an approximation of the family of functions $h_p$, along with an $N$-dimensional function space $\mathcal{U}_N$, simply by setting
\begin{equation}
I^K_N\big(h_p(\cdot) \big) (z) := I^\text{Cheby}_N\big(f_1(p,\cdot)\big)(z) f_2(z)
\end{equation} 
 for every $z\in\Omega$. From \eqref{Cheby-error}, the approximation $I^K_N$ inherits the error bound,
 \begin{equation}\label{IK-error}
\sup_{p\in\OP,z\in\Omega}\big| h_p(z) - I^K_N\big(h_p(\cdot)\big)(z) \big| \le c_1 \varrho^{-N} ,%\quad\text{for every $p\in\OP$,}
\end{equation} 
with constant $c_1:=c(f_1)\max_{z\in\Omega}|f_2(z)|$. Using \eqref{IK-error}, we can apply the general convergence result from 
Theorem 2.4 in\tild\cite{MadayNguyenPateraPau2009}. Consulting their proof, we realize that
\begin{align*}
\sup_{p\in\OP}\big\|h_p-I_M(h)(p,\cdot)\big\|_{\infty} &\leq CM(\varrho/4)^{-M}%,\label{expo-h}\\
%\big|\OI(h_p,\mu) - \II_M(h,\mu)(p)\big|&\leq C\mu(\Omega)M(\varrho/4)^{-M}.\label{expo-Int}
\end{align*} 
with $C=\frac{c_1\varrho}{4}$. Equation \eqref{expo-Int} follows by integration.

The proof follows analogously under assumption (b).
\end{proof}

\begin{remark}\label{rem-explixit-bound}
The proof makes the constant $C$ explicit. Under assumption (a) with representation\tild\eqref{eq-h-factors}, it is given by
\begin{equation}
C %= \frac{\varrho}{4}c(f_1) \max_{z\in\Omega}|f_2(z)| 
= \frac{\varrho}{2(\varrho-1)}\max_{(p,z)\in \OP\times B(\Omega,\varrho)} \big|f_1(p,z)\big| \max_{z\in\Omega}|f_2(z)|.
\end{equation}
Under assumption (b), an analogous constant can be derived.
%Under assumption (b) with 
%\begin{equation}\label{eq-h-factors}
%h(p,z) = f_1(p,z) f_2(p)\qquad \text{for all }p,z\in \OP\times\Omega,
%\end{equation}
%such that $f_1$ has an extension $f_1:B(\OP,\varrho)\times B(\Omega,\varrho)\to\cc$ that is boundend and for every fixed $p\in \OP$ is analytic in the open Bernstein ellipse, i.e.\ in the interior of $B(\Omega,\varrho)$. 
%it is given by
%\begin{equation}
%C= \frac{\varrho}{4}c(f_1) \max_{z\in\Omega}|f_2(z)| = \frac{\varrho}{\varrho-1}\max_{(p,z)\in \OP\times B(\Omega,\varrho)} \big|f_1(p,z)\big| \max_{z\in\Omega}|f_2(z)|.
%\end{equation}

\end{remark}
%As a first application that is as obvious as it is possibly useful.

\begin{remark}\label{rem-MultiscaleIntegration} 
Consider a multivariate integral of the form
\begin{equation}
\OI_M(h,\mu_1\otimes\mu_2):=\int_{\OP}\int_{\Omega} h(p,z) \mu_1(\dd z) \mu_2(\dd p)
\end{equation}
with compact sets $\OP\subset\rr^D$ and $\Omega\subset\rrd$  equipped with finite Borel-measures $\mu_1$ and\tild$\mu_2$. Then, the application of the magic point empirical interpolation as presented in section\tild\ref{sec-magic} yields
\begin{equation}
 \II_M(h,\mu_1\otimes\mu_2) := \sum_{m=1}^M \int_{\OP} h_p(z^\ast_m) \mu_2(\dd p) \int_{\Omega} \theta^M_m(z)  \mu_1(\dd z),
 \end{equation}
and, under the assumptions of Theorem \ref{theo-conv-magic-integration}, we obtain
 \begin{align}
\big|\OI(h,\mu_1\otimes\mu_2) - \II_M(h,\mu_1\otimes\mu_2)\big|&\leq C\mu_1(\Omega)\mu_2(\OP)M(\varrho/4)^{-M}.\label{expo-IntInt}
\end{align} 
\end{remark}
%Theorem \ref{theo-conv-magic-integration} reveals the possibility of beating the curse of dimensionality -- of either the parameter space or the integration domain.
If the domain $\Omega$ is one dimensional and $z\mapsto h_p(z)$, enjoys desirable analyticity properties, the \emph{approximation error} of magic point integration \emph{decays exponentially} in the number $M$ of interpolation nodes, \emph{independently of the dimension of the parameter space} $\OP$. Vice versa, if the parameter space is one dimensional and $p\mapsto h_p(z)$  enjoys desirable analyticity properties, the \emph{approximation error decays exponentially} in the number $M$ of interpolation nodes, \emph{independently of the dimension of the integration space}\tild$\Omega$. The reason why magic point interpolation and integration do not suffer from the curse of dimensionality -- in contrast to standard polynomial approximation -- is owed to the fact that classic interpolation methods are adapted to very large function classes, whereas magic point interpolation is adapted to a parametrized class of functions. %If this class can be approximated with an exponential error decay of an order larger than $\log(4)$, an exponential order of convergence is obtained by the magic point interpolation.

In return, this highly appealing convergence comes with an additional cost: The offline phase involves a loop over optimizations in order to determine the magic parameters and the magic points. Whereas the online phase is independent of the dimensionality, the offline phase is thus affected. 
Let us further point out that Theorem\tild\ref{theo-conv-magic-integration} is a result on the \emph{theoretical iterative procedure} as described in section\tild\ref{sec-magic}. As we will discuss in section \ref{sec-complexity} below, the implementation inevitably involves additional problem simplifications and approximations, in order to perform the necessary optimizations. In particular, instead of the whole parameter space a training set is fixed in advance. For this more realistic setting, %Theorem\tild\ref{theo-conv-magic-integration} reduces to a convergence result for those parameters that lie in the training set. There are, however,
rigorous a posteriori error bounds have been developed for the empirical interpolation method, see \cite{EftangGreplPatera2010}. These bounds rely on derivatives in the parameters and straightforwardly translate to an error bound for magic point integration.

\section{Magic Points for Fourier Transforms}\label{sec-magic-FT}

In various fields of applied mathematics, Fourier transforms play a crucial role and parametric Fourier inverse transforms of the form
\begin{equation}\label{para-fourierint}
f_q(x) = \frac{1}{2\pi} \int_\rr \ee{-i z x} \widehat{f_q}(z)\dd z
\end{equation}
need to be computed for different values $p=(q,x)\in\OP=\OQ\times\OX$, where the function $z\mapsto\widehat{f_q}(z) := \int_\rr \ee{iz y} f_q(y)\dd y$ is well defined and integrable for all $q\in\OQ$.

A prominent example arises in the context of signal processing, where signals are filtered with the help of \emph{short-time Fourier transform} (STFT) that is of the form
\begin{equation}\label{STFT}
\Phi_b(f)(z) := \int_\rr f(t) w(t-b) \ee{-iz t} \dd t,
\end{equation}
with a \textit{window function} $w$ and parameter $b$. One typical example for windows are the Gau{\ss} windows, $w_\sigma(t) =\frac{1}{\sqrt{2\pi \sigma^2}}\ee{-t^2/(2\sigma^2)}$, where an additional parameter appears, namely the standard deviation  $\sigma$ of the normal distribution. The STFT with a Gau{\ss} window has been introduced by \cite{Gabor1946}. His pioneering approach has become indispensable for time-frequency signal analysis. We refer to \mbox{\cite{Debnath2014}} for historical backgrounds.

%A special case that is of general and particular interest arises for a singleton parameter set $\OQ$ and compact set  $\OX\subset \rr$. %where \eqref{para-fourierint} reduces to
% \begin{equation}\label{x-paramteric-FT}
% \OI(h_x):= \int_{\Omega} h_x(z) \dd z,\, \text{where }h_x(z)  \frac{1}{2\pi}  \ee{-i z x} \widehat{f}(z).%\quad\text{for all }x\in\OX.
% \end{equation}
%Here, magic point integration can be a valuable alternative for the Fast Fourier Transform (FFT) algorithm, in which the integrals from\tild \eqref{x-paramteric-FT} are discretized according to the discrete Fourier transform and are evaluated for a given set of equidistantly distributed values $x$ simultaneously.

We consider the truncation of the integral in \eqref{para-fourierint} to a compact integration domain $\Omega=[\overline{\Omega},\underline{\Omega}]\subset\rr$ and choose the same domain $\Omega$ for all $p=(q,x)\in\OP=\OQ\times\OX$. Thus, we are left to approximate for all $p=(q,x)\in\OP$ the integral
 \begin{equation}\label{Paramteric-FT}
 \OI(h_p):= \int_{\Omega} h_p(z) \dd z,\, \text{where }h_p(z) =h_{(q,x)}(z)= \frac{1}{2\pi}  \ee{-i z x} \widehat{f_q}(z).%\quad\text{for all }p=(x,q)\in\OP.
 \end{equation}
Then, according to section \ref{sec-magic}, we consider the approximation of $\OI(h_p)$ by magic point integration, i.e.\ by
\begin{equation}
 \II_M(h)(p) := \sum_{m=1}^M h_p(z^\ast_m) \int_{\Omega} \theta^M_m(z) \dd z, %= \sum_{m=1}^M h_p(z^\ast_m) J(\theta^M_m,\mu),
 \end{equation}
 where $z^*_m$ are the magic points and $\theta^M_m$ are given by \eqref{def-theta} for every $m=1,\ldots,M$. 
 In those cases where the analyticity properties of $q\mapsto  \widehat{f_q}(z)$ or of $z\mapsto  \widehat{f_q}(z)$ are directly accessible, Theorem \ref{theo-conv-magic-integration} can be applied to estimate the error $\sup_{(q,x)\in\OQ\times\OX}|\OI(h_{(q,x)})-\II_M(h)(q,x)|$. The following corollary offers a set of conditions in terms of existence of exponential moments of the functions $f_q$.
\begin{corollary} 
\label{cor:eins}
For $\eta>\sqrt{15/8}\,(\overline{\Omega}-\underline{\Omega})$ and every parameter $q\in\OQ$ assume $\int_\Omega \ee{(\eta+\varepsilon)|x|} \big|f_q(x) \big| \dd x <\infty$ for some $\varepsilon>0$ and assume further
\[
\sup_{q\in\OQ} \int_\Omega \ee{\eta|x|} \big|f_q(x) \big| \dd x <\infty.
\]
Then
 \begin{align}
\big\|h-I_M(h)\big\|_{\infty} &\leq C(\eta)M\big(\varrho(\eta)/4\big)^{-M},\label{expo-h-FT}\\
\sup_{(q,x)\in\OQ\times\OX}\big|\OI(h_{(q,x)}) - \II_M(h)(q,x)\big|&\leq C(\eta)|\Omega|M\big(\varrho(\eta)/4\big)^{-M},\label{expo-Int-FT}
\end{align}
with
\begin{align*}
\varrho(\eta) &= \frac{2\eta}{|\Omega|}+\sqrt{\left(\frac{2\eta}{|\Omega|}\right)^2+1},\\
 C(\eta) &= \frac{\varrho(\eta)}{4\pi(\varrho(\eta) - 1)} \ee{\eta( -\underline{\OX}\,\vee\,\overline{\OX})}\sup_{q\in\OQ} \int_\Omega \ee{\eta|x|} \big|f_q(x) \big| \dd x,
\end{align*}
where $|\Omega|:=\overline{\Omega}-\underline{\Omega}$, $\underline{\OX} = \inf \OX$ and $\overline{\OX} = \sup \OX$. %\arg\max(\Omega) - \arg\min(\Omega)$
\end{corollary}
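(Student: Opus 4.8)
The plan is to derive Corollary~\ref{cor:eins} from Theorem~\ref{theo-conv-magic-integration}(a), together with the explicit constant recorded in Remark~\ref{rem-explixit-bound}, applied to the one-dimensional integration domain $\Omega=[\underline\Omega,\overline\Omega]\subset\rr$ and the parametric family $h_{(q,x)}(z)=\frac{1}{2\pi}\ee{-izx}\widehat{f_q}(z)$, $p=(q,x)\in\OP=\OQ\times\OX$, with $\widehat{f_q}(z)=\int_\Omega\ee{izy}f_q(y)\,\dd y$. The routine part is to check Assumptions~\ref{assumption:magic}: $\OP=\OQ\times\OX$ is compact, the bound $\sup_{q\in\OQ}\int_\Omega\ee{\eta|x|}|f_q(x)|\,\dd x<\infty$ makes $h$ bounded on $\OP\times\Omega$ (where $|\ee{-izx}|=1$ since $z$ is real), and, given the continuity of $q\mapsto f_q$, sequential continuity of $p\mapsto h_p$ follows by dominated convergence, while not all $f_q$ vanish. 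The substantive part is to establish the analytic property of $h$ with respect to a generalized Bernstein ellipse $B(\Omega,\varrho)$ in the second argument for a suitable $\varrho>4$, and then to track the constant produced.

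For the factorization I would take $f_2\equiv1$ and $f_1\big((q,x),z\big)=\frac{1}{2\pi}\ee{-izx}\widehat{f_q}(z)$; since $\ee{-izx}$ is entire in $z$, the only analytic continuation to produce is that of the Fourier transform. Here the parameter-wise hypothesis $\int_\Omega\ee{(\eta+\varepsilon)|x|}|f_q(x)|\,\dd x<\infty$ enters: it supplies an integrable majorant for $y\mapsto\ee{izy}f_q(y)$, and for $y\mapsto y\,\ee{izy}f_q(y)$, on compact subsets of the strip $\{|\Im z|<\eta+\varepsilon\}$, which legitimizes differentiation of $\widehat{f_q}(z)=\int_\Omega\ee{izy}f_q(y)\,\dd y$ under the integral and shows that $z\mapsto\widehat{f_q}(z)$ extends holomorphically to that strip. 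On the smaller closed strip $\{|\Im z|\le\eta\}$ the bound $|\ee{izy}|=\ee{-\Im(z)y}\le\ee{\eta|y|}$ gives the uniform estimate $\sup_{q\in\OQ}\sup_{|\Im z|\le\eta}|\widehat{f_q}(z)|\le\sup_{q\in\OQ}\int_\Omega\ee{\eta|y|}|f_q(y)|\,\dd y$, while for $x\in\OX$ and $|\Im z|\le\eta$ one has $|\ee{-izx}|=\ee{\Im(z)x}\le\ee{\eta(-\underline\OX\,\vee\,\overline\OX)}$ by maximizing $\Im(z)x$ over the admissible range.

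It then remains to fit a Bernstein ellipse of parameter exceeding $4$ inside this strip. The generalized Bernstein ellipse $B(\Omega,\varrho)$ has largest imaginary part $\tfrac{|\Omega|}{4}\big(\varrho-\varrho^{-1}\big)$, and $\varrho(\eta)$ from the statement is exactly the root of $\tfrac{|\Omega|}{4}\big(\varrho-\varrho^{-1}\big)=\eta$, so the open ellipse $B\big(\Omega,\varrho(\eta)\big)$ lies in $\{|\Im z|<\eta\}$ and hence in the strip of holomorphy of every $\widehat{f_q}$; consequently $f_1$ has a bounded holomorphic extension in the second argument to $B\big(\Omega,\varrho(\eta)\big)$ with constant at most $C_1:=\frac{1}{2\pi}\ee{\eta(-\underline\OX\,\vee\,\overline\OX)}\sup_{q\in\OQ}\int_\Omega\ee{\eta|y|}|f_q(y)|\,\dd y$. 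A short computation with the quadratic defining $\varrho(\eta)$ shows that the hypothesis $\eta>\sqrt{15/8}\,|\Omega|$ forces $\varrho(\eta)>4$. Theorem~\ref{theo-conv-magic-integration}(a) then yields \eqref{expo-h-FT}, and integrating over $\Omega$ against Lebesgue measure (so that $\mu(\Omega)=|\Omega|$) yields \eqref{expo-Int-FT}; specializing the constant of Remark~\ref{rem-explixit-bound} to $f_2\equiv1$ gives $\frac{\varrho(\eta)}{2(\varrho(\eta)-1)}\,C_1$, which is precisely $C(\eta)$.

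The main obstacle is not a single delicate estimate but the coordination of the three ingredients: turning the scalar exponential-moment hypotheses into a \emph{uniform-in-$q$} holomorphic extension of $\widehat{f_q}$ to a genuine complex strip -- where the $\varepsilon$-margin is needed precisely so that holomorphy survives up to, and slightly beyond, the imaginary level $\eta$ on which the uniform sup-bound is available -- then inscribing a Bernstein ellipse with $\varrho>4$ into that strip, and finally propagating both $\varrho(\eta)$ and the supremum bound through Theorem~\ref{theo-conv-magic-integration} and Remark~\ref{rem-explixit-bound} so that the closed forms $\varrho(\eta)$ and $C(\eta)$ emerge exactly as stated.
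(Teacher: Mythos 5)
Your proposal is correct and follows essentially the same route as the paper: extend $z\mapsto\widehat{f_q}(z)$ holomorphically to the strip $\rr+i[-\eta,\eta]$ (the paper invokes Fubini and Morera where you differentiate under the integral with the $\varepsilon$-margin as majorant — an immaterial difference), inscribe the generalized Bernstein ellipse by solving $\tfrac{|\Omega|}{4}(\varrho-\varrho^{-1})=\eta$ to get $\varrho(\eta)$, verify $\varrho(\eta)>4$ from the hypothesis on $\eta$, and then apply Theorem~\ref{theo-conv-magic-integration}(a) with the constant from Remark~\ref{rem-explixit-bound} specialized to $f_2\equiv1$. Your bookkeeping of the constant $C(\eta)$, including the factor $\ee{\eta(-\underline{\OX}\,\vee\,\overline{\OX})}$ from bounding $|\ee{-izx}|$ on the strip, matches the paper's.
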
 

\begin{proof}
From the theorem of Fubini and the lemma of Morera we obtain that the mappings $z \mapsto \widehat{f_q}(z)$ have analytic extensions to the complex strip $\rr + i[-\eta,\eta]$.
We determine the value of $\varrho(\eta)$. It has to be chosen such that the associated semiminor $b_\varrho$ of the Bernstein ellipse $B([-1,1],\varrho)$ is mapped by $\tau_{[\underline{\Omega},\overline{\Omega}]}$ onto the semiminor of the generalized Bernstein ellipse $B([\underline{\Omega},\overline{\Omega}],\varrho)$ that is of length $\eta$. Using the relation
	\begin{equation}
		b_\varrho = \frac{\varrho - \frac{1}{\varrho}}{2}
	\end{equation}
between the semiminor $b_\varrho$ of a Bernstein ellipse and its ellipse parameter $\varrho$ we solve
	\begin{equation}
		\Im\big(\tau_{[\underline{\Omega},\overline{\Omega}]}(i\,b_\varrho)\big) = \frac{\overline{\Omega}-\underline{\Omega}}{2}\,b_\varrho = \eta
	\end{equation}
for $\varrho$ and find that for
	\begin{equation}
		\varrho(\eta)= \frac{2\eta}{\overline{\Omega}-\underline{\Omega}}+\sqrt{\left(\frac{2\eta}{\overline{\Omega}-\underline{\Omega}}\right)^2+1}
	\end{equation}
the Bernstein ellipse $B(\Omega,\varrho(\eta))$ is contained in the strip of analyticity of $\widehat{f_q}$ and by the choice of $\eta$ we have $\varrho(\eta)>4$. Hence assumption (a) of Theorem \ref{theo-conv-magic-integration} is satisfied. In regard to Remark \ref{rem-explixit-bound} we also obtain the explicit form of the constant $C(\eta)$, which proves the claim.
\end{proof}

Similarly, we consider the case where $\OQ$ is a singleton and
 \begin{equation}
 	h_p(z) = h_x(z) = \frac{1}{2\pi}\ee{-izx}\widehat{f}(z).
 \end{equation}
Under this assumption we obtain an interesting additional assertion if the integration domain $\Omega$ is rather small. This case occurs for example for approximations of STFT.
\begin{corollary} 
For $\eta>\sqrt{15/8}\,(\overline{\OX}-\underline{\OX})$ we have
 \begin{align}
\big\|h-I_M(h)\big\|_{\infty} &\leq C(\eta)M\big(\varrho(\eta)/4\big)^{-M},\\%\label{expo-h-FT}\\
\sup_{x\in\OX}\big|\OI(h_x)) - \II_M(h)(x)\big|&\leq C(\eta)|\Omega|M\big(\varrho(\eta)/4\big)^{-M},%\label{expo-Int-FT}
\end{align}
with
\begin{align*}
\varrho(\eta) &= \frac{2\eta}{|\OX|}+\sqrt{\left(\frac{2\eta}{|\OX|}\right)^2+1},\\
 C(\eta) &= \frac{\varrho(\eta)}{4\pi(\varrho(\eta) - 1)} \ee{\eta( -\underline{\Omega}\, \vee\, \overline{\Omega})}\sup_{z\in\Omega} \big|\widehat{f}(z) \big|,
\end{align*}
where $|\OX|:=\overline{\OX}-\underline{\OX}$, $\underline{\OX} = \inf \OX$ and $\overline{\OX} = \sup \OX$. %\arg\max(\Omega) - \arg\min(\Omega)$
\end{corollary}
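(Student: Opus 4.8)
The plan is to reduce the statement to Corollary~\ref{cor:eins} by swapping the roles of the two arguments. In Corollary~\ref{cor:eins} the integration variable $z$ lives in $\Omega$ and analyticity is exploited in that argument via assumption~(a) of Theorem~\ref{theo-conv-magic-integration}; here, since $\OQ$ is a singleton, the only genuine parameter is $x\in\OX$, and what matters is the analytic dependence of $h_x(z)=\tfrac{1}{2\pi}\ee{-izx}\widehat f(z)$ on $x$. So first I would observe that, for each fixed $z\in\Omega$, the map $x\mapsto \ee{-izx}$ extends to an entire function of $x\in\cc$, hence $h$ has the analytic property with respect to any strip $\rr+i[-\eta,\eta]$ in the \emph{first} argument, with the factorization $h_x(z)=f_1(x,z)f_2(z)$ given by $f_1(x,z)=\tfrac{1}{2\pi}\ee{-izx}$ (the $x$-dependent, analytically extendable factor) and $f_2(z)=\widehat f(z)$. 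This puts us in case~(b) of Theorem~\ref{theo-conv-magic-integration} with the parameter set $\OX$ in place of $\OP$.

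Next I would carry out the Bernstein-ellipse computation exactly as in the proof of Corollary~\ref{cor:eins}, but now in the $x$-variable over the interval $[\underline\OX,\overline\OX]$. The requirement is that the generalized Bernstein ellipse $B(\OX,\varrho)$ sits inside the strip of analyticity of $x\mapsto f_1(x,z)$ — which is all of $\cc$, so the only binding constraint comes from controlling the constant $C_1$ uniformly, i.e.\ we need the semiminor axis of $B(\OX,\varrho)$ to be at most the chosen $\eta$. Using $b_\varrho=(\varrho-1/\varrho)/2$ and $\Im\big(\tau_{[\underline\OX,\overline\OX]}(i\,b_\varrho)\big)=\tfrac{|\OX|}{2}b_\varrho=\eta$ and solving for $\varrho$ gives $\varrho(\eta)=\tfrac{2\eta}{|\OX|}+\sqrt{(\tfrac{2\eta}{|\OX|})^2+1}$, precisely the stated formula. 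The hypothesis $\eta>\sqrt{15/8}\,|\OX|$ is exactly what forces $\tfrac{2\eta}{|\OX|}>\sqrt{15/4}$, hence $\varrho(\eta)>\sqrt{15/4}+\sqrt{15/4+1}=\tfrac{\sqrt{15}}{2}+\tfrac{\sqrt{19}}{2}$; I would double-check that this threshold indeed yields $\varrho(\eta)>4$ (equivalently $b_{\varrho}>15/8$ at $\varrho=4$, since $b_4=(4-1/4)/2=15/8$), so the constant factor $(\varrho/4)^{-M}$ in Theorem~\ref{theo-conv-magic-integration} genuinely contracts.

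Then the two error bounds follow directly: applying Theorem~\ref{theo-conv-magic-integration}(b) gives $\|h-I_M(h)\|_\infty\le C M(\varrho(\eta)/4)^{-M}$, and integrating over $\Omega$ (a compact interval of length $|\Omega|$, with Lebesgue measure) yields $\sup_{x\in\OX}|\OI(h_x)-\II_M(h)(x)|\le C|\Omega|M(\varrho(\eta)/4)^{-M}$, just as in \eqref{expo-h} and \eqref{expo-Int}. For the explicit constant I would invoke the analogue of Remark~\ref{rem-explixit-bound} under assumption~(b): $C=\tfrac{\varrho(\eta)}{2(\varrho(\eta)-1)}\sup_{(x,z)\in\overline{B(\OX,\varrho)}\times\Omega}|f_1(x,z)|\cdot\sup_{z\in\Omega}|f_2(z)|$. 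Here $\sup_{z\in\Omega}|f_2(z)|=\sup_{z\in\Omega}|\widehat f(z)|$, and $|f_1(x,z)|=\tfrac{1}{2\pi}|\ee{-izx}|=\tfrac{1}{2\pi}\ee{z\,\Im(x)}$; maximizing over $z\in\Omega\subset\rr$ and over $x$ in the (closure of the) Bernstein ellipse — whose imaginary part is bounded by $\eta$ in modulus — gives $\tfrac{1}{2\pi}\ee{\eta(-\underline\Omega\vee\overline\Omega)}$, producing exactly the stated $C(\eta)=\tfrac{\varrho(\eta)}{4\pi(\varrho(\eta)-1)}\ee{\eta(-\underline\Omega\vee\overline\Omega)}\sup_{z\in\Omega}|\widehat f(z)|$.

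The main obstacle is essentially bookkeeping rather than conceptual: one must be careful that in case~(b) the "parameter" interval is $\OX$ while the bound on $|f_1|$ involves evaluating $\ee{-izx}$ at complex $x$ ranging over the Bernstein ellipse $B(\OX,\varrho(\eta))$ but at \emph{real} $z\in\Omega$, so the exponential growth is governed by $\eta$ (the semiminor, bounding $|\Im x|$) against the real interval $\Omega$ — hence the factor $\ee{\eta(-\underline\Omega\vee\overline\Omega)}$ rather than anything depending on $\OX$. One should also confirm that $f_2=\widehat f$ need not be analytically continued here (only $f_1$ is extended in case~(b)), so the integrability hypotheses on $f$ from Corollary~\ref{cor:eins} are \emph{not} needed — only boundedness of $\widehat f$ on the real interval $\Omega$, which holds since $\widehat f$ is continuous and $\Omega$ compact; this is why the hypothesis list in the present corollary is so much shorter. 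Finally I would note that Assumptions~\ref{assumption:magic} are met because $x\mapsto h_x$ is norm-continuous (uniform continuity of $\ee{-izx}$ on the compact set $\Omega\times\OX$) and $h$ is not identically zero unless $\widehat f\equiv 0$ on $\Omega$, which we may exclude as trivial.
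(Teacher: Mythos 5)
Your proposal is correct and follows exactly the route the paper intends: the paper's own proof is the single sentence that the argument is analogous to Corollary~\ref{cor:eins}, and your reduction to case~(b) of Theorem~\ref{theo-conv-magic-integration} with $f_1(x,z)=\tfrac{1}{2\pi}\ee{-izx}$, $f_2=\widehat f$, the Bernstein-ellipse computation in the $x$-variable, and the constant from the case-(b) analogue of Remark~\ref{rem-explixit-bound} is precisely that analogous argument, worked out in more detail than the paper provides. (One immaterial slip: $2\sqrt{15/8}=\sqrt{15/2}$, not $\sqrt{15/4}$; your subsequent, correct criterion $b_4=15/8$ is what actually yields $\varrho(\eta)>4$.)
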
 

The proof of the corollary follows similarly to the proof of Corollary\tild\ref{cor:eins}.

\section{Case Study}\label{sec-case-study}

\subsection{Implementation and complexity}
\label{sec-complexity}

%{\color{blue} Todo. Dieser Abschnitt ist noch sehr im Rohbau. Ist es korrekt was hier steht? Ist es sinnvoll? Was fehlt?}

%The implementation of the offline phase is straightforward and its complexity is of order~$M$.
The implementation of the algorithm requires further discretizations. For the parameter selection, we replace the continuous parameter space $\mathcal{P}$ by a discrete parameter cloud $\mathcal{P}^\text{disc}$. %Each magic parameter that the algorithm selects is a member of this discrete set.
%Consequently, the set $\mathcal{U}$ of linearly independent functions that the algorithm is trained to approximate is replaced by a discrete set, as well. Since every $u\in\mathcal{U}$ can by identified by a parameter $p\in\mathcal{P}$, we have $|\mathcal{U}|=|\mathcal{P}|=L$ in the numerical implementation.
Additionally, for the selection of magic points we replace $\Omega$ by a discrete set $\Omega^\text{disc}$, instead of considering the whole continuous domain. Each function $h_p$ is then represented by its evaluation on this discrete $\Omega^\text{disc}$ and is thus represented by a finite-dimensional vector, numerically.

The crucial step during the offline phase is finding the solution to the optimization problem
\begin{align}\label{argmax}
\underset{(p,z)\in\OP^\text{disc}\times\Omega^\text{disc}}{\argmax}\bigg|h_{p}(z) - \sum_{m=1}^M h_{p}(z^\ast_m) \theta^M_m(z)\bigg|.
\end{align}

In a continuous setup, problem~\eqref{argmax} is solved by optimization routines. In our discrete implementation, however, we are able to consider \emph{all} magic parameter candidates in the discrete parameter space $\mathcal{P}^\text{disc}$ and \emph{all} magic point candidates in the discrete domain $\Omega^\text{disc}$ to solve problem~\eqref{argmax}. This results in a complexity of $\mathcal{O}(M\cdot |\OP^\text{disc}|\cdot |\Omega^\text{disc}|)$ during the offline phase for identifying the basis functions and magic points of the algorithm. We comment on our choices for the dimensionality of the involved discrete sets in the numerical section, later. %There, we choose both values large enough that both the discrete domain $\Omega^\text{disc}$ and the parameter cloud $\mathcal{P}^\text{disc}$ are rich enough to represent their continuous counterparts sufficiently well.
Before magic point integration can be performed, the quantities $\int_\Omega \theta^M_m(z)\dd z$ need to be computed for $m=1,\ldots,M$. The complexity of this final step during the offline phase depends on the number of integration points that the integration method uses. 
%In our numerical experiments we left this decision to Matlab's \texttt{quadgk} routine.

\subsection{Tempered Stable Distribution}
\label{sec:TemperedStableEx}

We test the parametric integration approach on the evaluation of the density of a tempered stable distribution as an example. This is a parametric class of infinitely divisible probability distributions. A random variable $X$ is called infinitely divisible, if for each $n\in\nn$ there exist $n$ independent and identically distributed random variables whose sum coincides with $X$ in distribution. The rich class of infinitely divisible distributions, containing for instance the Normal and the Poisson distribution, is characterized through their Fourier transforms by the L\'evy-Khintchine representation. Using this link, a variety of infinitely divisible distributions is specified by Fourier transforms. The class of tempered stable distributions is an example of this sort, namely  it is defined by a parametric class of functions, which by the theorem of L\'evy-Khintchine are known to be Fourier transforms of infinitely divisible probability distributions. Its density function, however, is not  explicitly available. In order to evaluate it, a Fourier inverse has to be performed numerically. 
As we show below, magic point integration can be an adequate way to efficiently compute this Fourier inversion for different parameter values and at different points on the density's domain.

Following \cite{KuechlerTappe2014}, %we assume a  probability space $(\Omega,\OF, \IP)$. C
parameters
	\begin{equation}
		\alpha^+,\lambda^+,\alpha^-,\lambda^- \in (0,\infty),\qquad \beta^+,\beta^-\in (0,1)
	\end{equation}
define the distribution $\eta_q$ %\todo{Max: Achtung, ich habe hier $q$ eingefügt und in den folgenden Zeilen, wir müssen sehen, ob das gut ist, wahrscheinlich nicht, aber etwas sollten wir bzgl. der Parameter hier in der Notation tun}
 on $(\IR,\OB(\IR))$ that we call a \emph{tempered stable distribution} and write
	\begin{equation}
		\eta_q = \text{TS}(q)=\text{TS}(\alpha^+,\beta^+,\lambda^+;\alpha^-,\beta^-,\lambda^-),
	\end{equation}
if its characteristic function $\varphi_q(z):=\int_{\rr}\ee{izx}\eta_q(\dd x)$ is given by
	\begin{equation}
	\label{eq:tscharfcn}
		\varphi_q(z) = \exp\left(\int_\IR (e^{izx}-1)F_q(\dd {x})\right),\quad z\in\IR,
	\end{equation}
with L{\'e}vy measure
	\begin{equation}
		F_q(\dd {x}) = \left(\frac{\alpha^+}{x^{1+\beta^+}}e^{-\lambda^+ x}\1_{x\in (0,\infty)} + \frac{\alpha^-}{x^{1+\beta^-}}e^{-\lambda^- x}\1_{x\in (-\infty,0)}\right) \dd x.
	\end{equation}
The tempered stable distribution is also defined for $\beta^+,\beta^-\in (1,2)$ where the characteristic function is given by a similar expression as in \eqref{eq:tscharfcn}. We consider the tempered stable distribution in a special case by introducing the parameter space
	\begin{equation}
	\label{eq:Qcgmy}
		\OQ = \{(C,G,M,Y)\ |\ C>0,\,G>0,\,M>0,\,Y\in(1,2)\}
	\end{equation}
and setting
	\begin{equation}
		\begin{split}
			\alpha^+ = \alpha^- = C,\qquad
			\lambda^- =  G,\qquad
			\lambda^+ = M,		\qquad	
			\beta^+ = \beta^- = Y.
		\end{split}
	\end{equation}
%	\begin{equation}
%		\begin{split}
%			\alpha^+ = \alpha^- =&\ C \\
%			\lambda^- = &\ G\\
%			\lambda^+ =&\ M\\			
%			\beta^+ = \beta^- =&\ Y.
%		\end{split}
%	\end{equation}
The resulting distribution $\gamma= \text{CGMY}(C,G,M,Y)$ is also known as CGMY distribution and is well established in finance,  see~\cite{CarrGemanMadanYor2002}. The density function of a tempered stable or a CGMY distribution, respectively, is not known in closed form. With $q=(C,G,M,Y)\in\OQ$ of \eqref{eq:Qcgmy} and $z\in\IR$, its Fourier transform, however, is explicitly given by
	\begin{equation}
	\label{eq:tscf}
		\varphi_q(z) = \exp\left(C\Gamma(-Y)\big((M-iz)^Y- M^Y + (G+iz)^Y - G^Y\big)\right),
	\end{equation}
wherein $\Gamma$ denotes the gamma function. By Fourier inversion, the density $f_q$ can be evaluated,
	\begin{equation}
	\label{eq:tsdensFinv}
		f_q(x) = \frac{1}{2\pi} \int_\IR e^{-i z x} \varphi_q(z) \dd {z}= \frac{1}{\pi} \int_0^\infty \Re\big(e^{-i z x} \varphi_q(z)\big) \dd {z}\qquad \text{for all $x\in\IR$}.
	\end{equation}

\subsection{Numerical Results}
We restrict the integration domain of \eqref{eq:tsdensFinv} to $\Omega=[0,75]$ and apply the parametric integration algorithm on a subset of
	\begin{equation}
		\OP = \OQ\times\IR,
	\end{equation}
specified in Table~\ref{tab:CGMYparameterints}.
%%%%%%%%%%%%%%%%%%%%%%%%%%%%%%
%%% Tab.: Parameter choices
%%%%%%%%%%%%%%%%%%%%%%%%%%%%%%
\begin{table}[]
\centering
\begin{tabular}{@{}lccccc@{}}
\toprule
         & $C$     & $G$     & $M$     & $Y$         & $x$ \\ \midrule
interval & $[1,5]$ & $[1,8]$ & $[1,8]$ & $1.1$ & $[-1,1]$    \\ \bottomrule
\end{tabular}
\caption{Parameter intervals for the numerical study. The parameter $Y$ is kept constant.}
\label{tab:CGMYparameterints}
\end{table}
We draw $4.000$ random samples from $\OP$ respecting the intervals bounds of Table~\ref{tab:CGMYparameterints} and run the offline phase until an $L^\infty$ accuracy of $10^{-12}$ is reached.% or $M=45$ basis functions have been constructed.
%%%%%%%%%%%%%%%%%%%%%%%%%%%%%%
%%% Fig.: MagicInt error decay
%%%%%%%%%%%%%%%%%%%%%%%%%%%%%%
\begin{figure}
\centering
\makebox[0pt]{\includegraphics[scale=0.95]{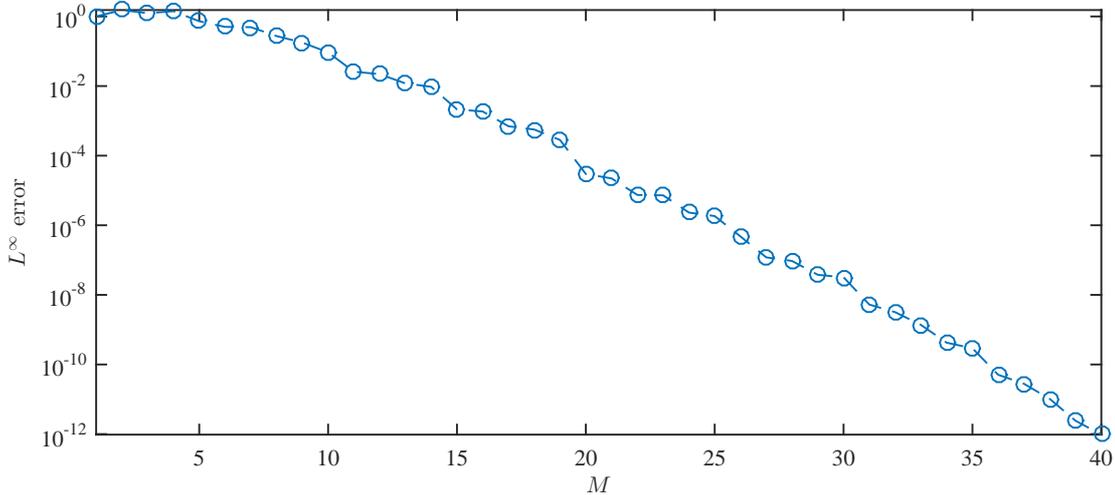}}
\caption{Decay of the error \eqref{defxi_M} on $\OP^\text{disc}$ and $\Omega^\text{disc}$ during the offline phase of the algorithm.}
\label{fig:tempStabOfflineErrordecay}
\end{figure}
%%%%%%%%%%%%%%%%%%%%%%%%%%%%%%
The error decay during the offline phase is displayed by Figure~\ref{fig:tempStabOfflineErrordecay}. We observe exponential error decay reaching the prescribed accuracy threshold at $M= 40$.

Let us have a closer look at some of the basis functions $q_m$ that are generated during the offline phase. Figure~\ref{fig:tempStabMagicIntIntegrands} depicts five basis functions that have been created at an early stage of the offline phase (left) together with five basis functions that have been identified towards the end of it (right).
%%%%%%%%%%%%%%%%%%%%%%%%%%%%%%
%%% Fig.: MagicInt some interpolands/qm's
%%%%%%%%%%%%%%%%%%%%%%%%%%%%%%
\begin{figure}
\centering
\makebox[0pt]{\includegraphics[scale=0.8]{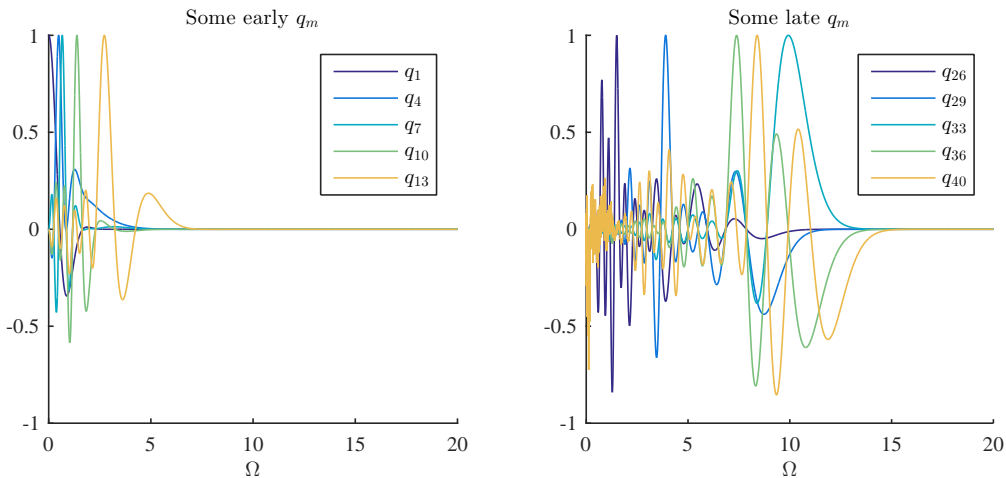}}
\caption{Some basis functions $q_m$ constructed early (left) and late (right) during the offline phase of the algorithm.}
\label{fig:tempStabMagicIntIntegrands}
\end{figure}
%%%%%%%%%%%%%%%%%%%%%%%%%%%%%%
Note that all plotted basis functions are evaluated on a relatively small subinterval of $\Omega$. They are numerically zero outside of it. Interestingly, the functions in both sets have intersections with the $\Omega$ axis rather close to the origin. Such intersections mark the location of magic points. Areas on $\Omega$ where these intersections accumulate thus reveal regions where the approximation accuracy of the algorithm is low. In these regions, the shapes across all parametrized integrands seem to be most diverse.

In the right picture we observe in comparison that at a later stage in the offline phase, magic points $z^\ast_m$ are picked farer away from the origin, as well. We interpret this observation by assuming that the more magic points are chosen close to the origin, the better the algorithm is capable of approximating integrands more precisely there. Thus, its focus shifts towards the right for increasing values of $M$ where now smaller variations attract its attention.

We now test the algorithm on parameters not contained in the training set. To this extent we randomly draw $1.000$ parameter sets $p_i=(C_i,G_i,M_i,Y_i,x_i)$, $i=1,\dots,1.000$, uniformly distributed from the intervals given by Table~\ref{tab:CGMYparameterints}. For each $p_i$, $i=1,\dots,1.000$, we evaluate $f_{(C_i,G_i,M_i,Y_i)}(x_i)$ by Fourier inversion using Matlab's \texttt{quadgk} routine. Additionally, we approximate $f_{(C_i,G_i,M_i,Y_i)}(x_i)$ using the interpolation operator $\OI_{m}$ for all values $m=1,\dots,M$. For each such $m$ we compute the absolute and the relative error.

The $L^\infty((p_i)_{i=1,\dots,1.000})$ error for each $m=1,\dots,M$ is illustrated by Figure~\ref{fig:tempStabOOSerrdecay}.
%%%%%%%%%%%%%%%%%%%%%%%%%%%%%%
%%% Fig.: MagicInt out of sample error decay
%%%%%%%%%%%%%%%%%%%%%%%%%%%%%%
\begin{figure}
\centering
\makebox[0pt]{\includegraphics[scale=1]{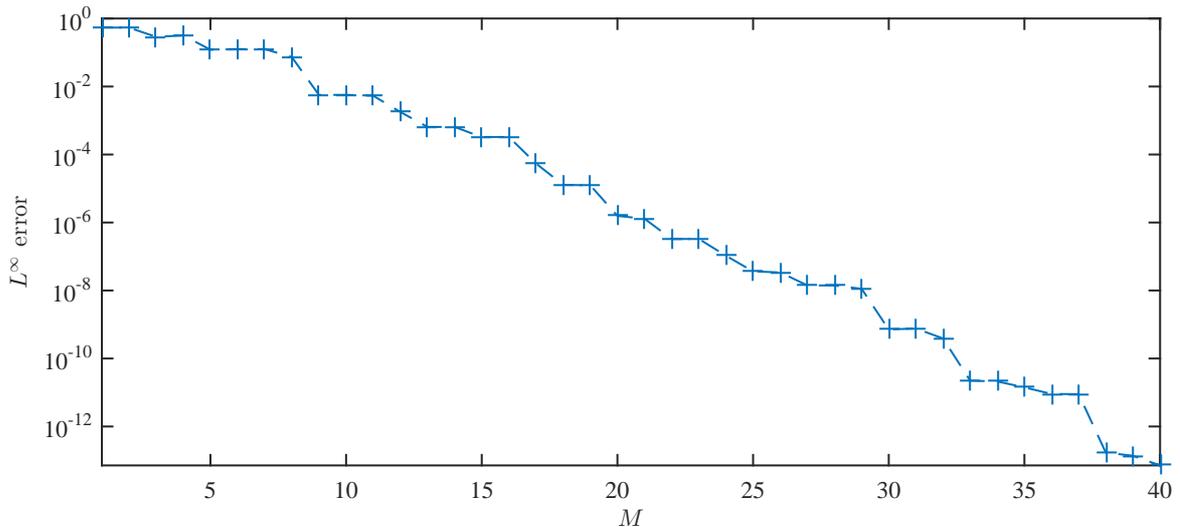}}
\caption{Out of sample error decay. For $1.000$ randomly drawn parameters $(C_i,G_i,M_i,Y_i,x_i)$, $i=1,\dots,1.000$ within the bounds prescribed by Table~\ref{tab:CGMYparameterints} the CGMY density is evaluated by the Parametric Integration method and compared to numerical Fourier inversion via Matlab's \texttt{quadgk} routine. the $L^\infty$ error decay is displayed.}
\label{fig:tempStabOOSerrdecay}
\end{figure}
%%%%%%%%%%%%%%%%%%%%%%%%%%%%%%
%%%%%%%%%%%%%%%%%%%%%%%%%%%%%%
%%% Fig.: MagicInt out of sample all errors
%%%%%%%%%%%%%%%%%%%%%%%%%%%%%%
\begin{figure}
\centering
\makebox[0pt]{\includegraphics[scale=1]{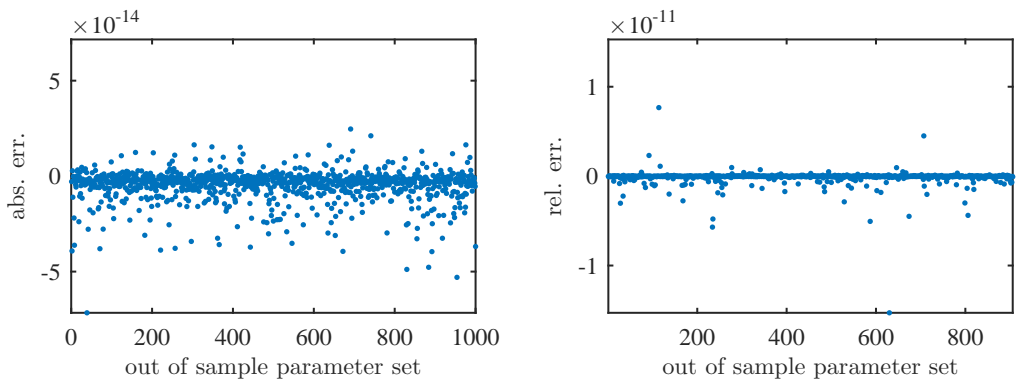}}
\caption{All out of sample errors in detail. Left: The absolute errors achieved for each of the $1.000$ randomly drawn parameter sets. Right: The relative errors for density values above $10^{-3}$.}
\label{fig:tempStabOOSallerrs}
\end{figure}
%%%%%%%%%%%%%%%%%%%%%%%%%%%%%%
The setup of the numerical experiment does not fall in the scope of our theoretical result in several respects. We discretized both the parameter space and the set of magic point candidates. While Table\tild\ref{tab:CGMYparameterints} ensures a joint strip of analyticity $\IR+i(-1,1)$ that all integrands $h_p$ share, a Bernstein ellipse $B(\Omega, \varrho)$ with $\varrho>4$ on which those integrands are analytic does not exist. In spite of all these limitations, we still observe that the exponential error decay of the offline phase is maintained.\footnote{This observation is confirmed by our empirical studies in \cite{GassGlauMair2015}, where the existence of such a shared strip of analyticity was sufficient for empirical exponential convergence.} From this we draw two conclusions. First, both $\Omega^\text{disc}$ and $\OP^\text{disc}$ appear sufficiently rich to represent their continuous counterparts. Second, the practical use of the algorithm extends beyond the analyticity conditions imposed by Theorem\tild\ref{theo-conv-magic-integration}.

Figure~\ref{fig:tempStabOOSallerrs} presents the absolute errors and the relative errors for each parameter set $p_i$, individually, for the maximal value of $M$. Note that only relative errors for CGMY density values larger than $10^{-3}$ have been plotted to exclude the influence of numerical noise. While individual outliers are not present among the absolute deviations, they dominate the relative deviations in contrast. This result is in line with the objective function that the algorithm minimizes. 

Finally, Figure~\ref{fig:tempStabErrCloud} displays all magic parameters identified during the offline phase together with those randomly drawn parameter constellations that resulted in the ten smallest absolute errors (green dots) and the ten largest absolute errors (orange dots). We observe that orange parameter constellations are found in areas densely populated by magic parameters whereas green parameter constellations often do not have magic parameters in their immediate neighborhood, at all. This result may surprise at first. On second thought, however, we understand that areas where magic parameters accumulate mark precisely those locations in the parameter space where approximation is especially challenging for the algorithm. Integrands associated with parameter constellations from there exhibit the largest variation. Approximation for white areas %in the parameter space
on the other hand is already covered by the previously chosen magic parameters. Consequently, green dots are very likely to be found there.

%%%%%%%%%%%%%%%%%%%%%%%%%%%%%%
%%% Fig.: MagicInt out of sample - 10 worst and best errors
%%%%%%%%%%%%%%%%%%%%%%%%%%%%%%
\begin{figure}
\centering
\makebox[0pt]{\includegraphics[scale=1]{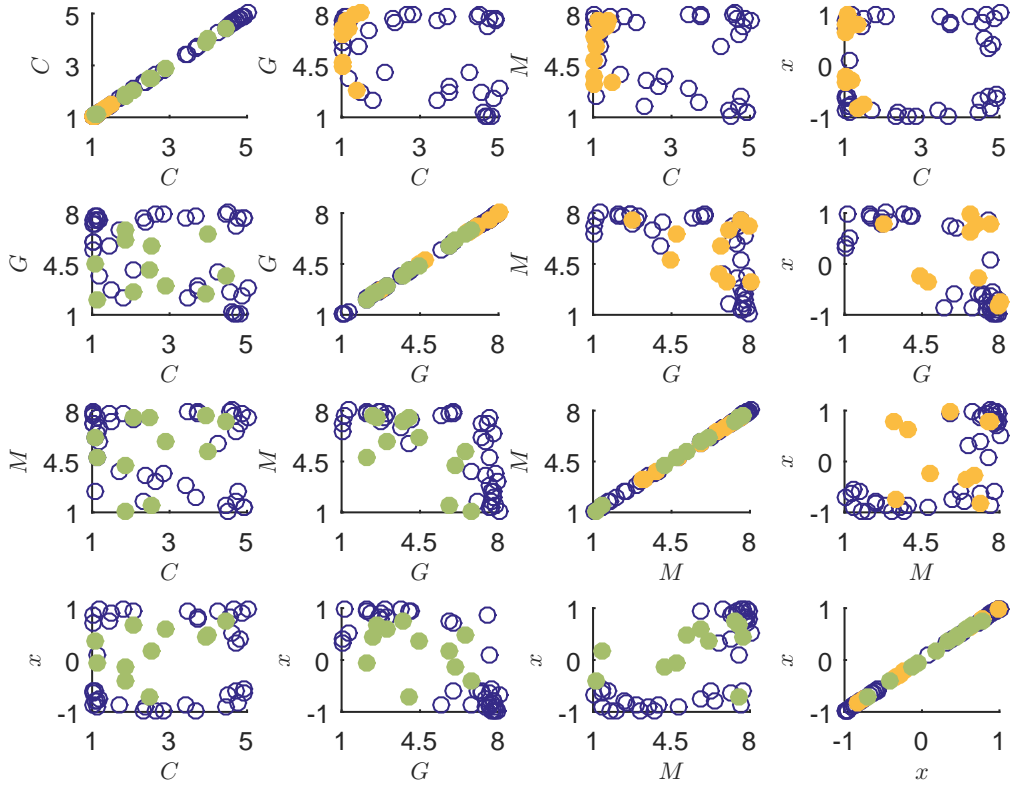}}
\caption{An illustration of those randomly drawn parameter constellations that resulted in the ten worst (orange) and the ten best (green) absolute errors. The empty blue circles are the magic parameters selected during the offline phase.}
\label{fig:tempStabErrCloud}
\end{figure}
%%%%%%%%%%%%%%%%%%%%%%%%%%%%%%

%\section{Conclusion}
%The empirical results that we obtained in section \ref{sec-case-study} as well as in \cite{GassGlauMair2015} for a large set of financial case applications, 

%\appendix
%\section{Insight into Magic Point Interpolation}\label{sec-insight-magic}

%\subsection{General features}

%\subsection{Proof of the Convergence Result}

\section*{Acknowledgement}
We thank Bernard Haasdonk, Laura Iapichino, Daniel Wirtz and Barbara Wohlmuth for fruitful discussions. Maximilian Ga{\ss} thanks the KPMG Center of Excellence in Risk Management  and Kathrin Glau acknowledges the TUM Junior Fellow Fund for financial support.

\bibliographystyle{elsarticle-harv}%num-names} 
  \bibliography{LiteraturFourierRB}
%
%\section{Old parts and experiments}
%
%%
%Fast parametric integration applies for example to
%\begin{enumerate}
%\item[--] mathematical finance for parametric option pricing and other relevant parametric quantities such as options' Greeks, value at risk VaR, expected shortfall, for uncertainty quantification, 
%\item[--] probability and statistics for parametric quantities such as distributional characteristics, e.g.  covariances, generalized moments, densities and distribution functions for parametric distributions
%\item[--] physics for
%\item[--] chemistry for
%\item[--] image and signal processing for e.g.\ cross-correlations,
%\item[--] pattern recognition for
%\item[--] compression of audio, image and video for
%\end{enumerate}

\end{document}